\newtheorem{prop}{Proposition}
\newtheorem{theorem}[prop]{Theorem}
\newtheorem{lemma}[prop]{Lemma}
\newtheorem{cor}[prop]{Corollary}
\newtheorem{definition}[prop]{{Definition}}
\newtheorem{remark}[prop]{{Remark}}
\renewcommand{\aa}
\newcommand{\End}{\operatorname{End}\nolimits}
\newcommand{\Hom}{\operatorname{Hom}\nolimits}
\newcommand{\soc}{\operatorname{soc}\nolimits}
\newcommand{\rad}{\operatorname{rad}\nolimits}
\renewcommand{\dim}{\operatorname{dim}\nolimits}
\newcommand{\gldim}{\operatorname{gldim}\nolimits}
\newcommand{\repdim}{\operatorname{repdim}\nolimits}
\definecolor{candyapplered}{rgb}{1.0, 0.03, 0.0}
\def\thm@space@setup{%
  \thm@preskip=0.7cm \thm@postskip=0.3cm
}
\begin{document}

\date{\today}
\title[Finitistic dimension of special multiserial algebras]{On the representation dimension and finitistic dimension of special multiserial algebras}

 \dedicatory{Dedicated to Ed Green on the occasion
     of his $70$th birthday}

\author{Sibylle Schroll}
\address{Department of Mathematics, University of Leicester, University Road, Leicester, LE1 7RH, United Kingdom.}
\email{schroll@leicester.ac.uk}

\keywords{representation dimension, radical embedding, splitting daturm, special multiserial algebra}
\thanks{Part of this  work took place during a visit of the author to the University of S\~ao Paulo: the author would like to thank Eduardo Marcos for his hospitality. This work was supported through the EPSRC fellowship grant EP/P016294/1. }

\subjclass[2010]{16G10, 05E10}

\begin{abstract}
For monomial special multiserial algebras, which in general are of wild representation type, we construct radical embeddings into  algebras of finite representation type. As a consequence, we show that the representation dimension of monomial and self-injective special multiserial algebras is less than or equal to three. This implies that  the finitistic dimension conjecture holds for all special multiserial algebras.
\end{abstract}

\maketitle


\section*{Introduction}

Many of the important open conjectures in representation theory of Artin algebras are of a homological nature, such as  the finitistic dimension conjecture, Nunke's condition and  Nakayma's conjectures. Amongst these conjectures there is a logical hierarchy, in that if  the finitistic dimension conjecture holds then Nunke's condition holds which in turn implies the  Nakayama conjectures; for an overview, see, for example \cite{Happel, Schroer, ZH}. 

The finitistic dimension conjecture states that for any Artin algebra $A$,  the supremum of the projective dimensions of the finitely generated right $A$-modules of finite projective dimension  is finite. This conjecture was originally posed as a question by Rosenberg and Zelinsky and then published by Bass in 1960 \cite{Bass}.

Although the finitistic dimension conjecture is open in general, there has been much related work in recent years reducing the problem to simpler classes of algebras \cite{  Wei, X}. There are many classes of algebras where the conjecture has been shown to hold \cite{EHIS, IT}.  For classes of algebras of mostly wild representation type, the two most prominent examples where the finitistic dimension conjecture is known to hold are the monomial algebras   \cite{GKK, IZ} and the radical cubed zero algebras \cite{GZ}. 

In this paper, we will show that the finitistic dimension conjecture holds for special multiserial algebras, a large class of mostly wild algebras, containing many other important and well-studied classes of algebras such as, for example, special biserial algebras, symmetric radical cubed zero algebras and almost gentle algebras \cite{GS1, GS2}. 

It is well known that most finite dimensional algebras are of wild representation type 
implying that their representation theory is at least as complicated as the representation
 theory of the free associative algebra in two variables. Special multiserial algebras 
 form a class of mostly wild finite dimensional algebras. It was shown in \cite{GS1} that the radical of their indecomposable modules  is a sum of uniserial modules 
 whose pairwise intersection is either a simple module or zero. This is an indication that uniserial modules play an important role in the study of their representation theory. In this paper, we show that for a monomial special multiserial algebra $A$ of infinite representation type,  the direct sum of all uniserial submodules of $A$ gives rise to an Auslander generator of $A$. 
 
  In order to show this, we construct radical embeddings from monomial special multiserial algebras to a direct product of representation finite string algebras whose quivers are  linearly oriented Dynkin diagrams of type $\mathbb{A}$ and cyclically oriented Dynkin diagrams of type $\mathbb{\tilde{A}}$. 
  Therefore by \cite{EHIS} we obtain that the representation dimension of a monomial special multiserial algebra is less or equal to three.

We further show that for any special multiserial algebra  $A$, a relation is either monomial or is a linear combination of elements in the socle of $A$.  We then apply the results in \cite{EHIS}  in combination with  our results on monomial special multiserial algebras, to show that the representation dimension of self-injective special multiserial algebras is less or equal to three.

To summarise, in this paper we show the following:
 
\begin{theorem}\label{mainresult}
Let $A$ be a monomial special multiserial algebra. Then there exists a radical embedding 
$f: A \to B$ where $B$ is an algebra of finite representation type.   
\end{theorem}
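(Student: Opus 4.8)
The plan is to realise $B$ as a finite direct product of Nakayama algebras, one for each maximal ``string'' of arrows in the quiver of $A$, and to let $f$ be the map that splits each vertex of $A$ into its several occurrences along these strings while sending each arrow to its unique copy. Recall from \cite{EHIS} that a radical embedding is an injective $K$-algebra homomorphism $f\colon A\to B$ with $f(\rad A)=\rad B$; since it is for such maps that the bound on the representation dimension is available, the whole point is to produce one whose target is of finite representation type.

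First I would exploit the defining condition of a special multiserial algebra. Writing $A=KQ/I$ with $I$ monomial, the hypothesis says that every arrow $\alpha$ has at most one arrow $\beta$ with $\alpha\beta\notin I$ and at most one arrow $\gamma$ with $\gamma\alpha\notin I$. Thus $\alpha$ has a well-defined unique successor and predecessor among nonzero length-two paths, and following these relations partitions the arrow set of $Q$ into maximal chains that are either finite directed paths or directed cycles. These are exactly the linearly oriented quivers of type $\mathbb{A}$ and the cyclically oriented quivers of type $\tilde{\mathbb{A}}$ promised in the introduction. For each such string $S=(\alpha_1,\dots,\alpha_n)$ I form the quiver $Q_S$ on fresh, position-indexed copies of the vertices met along $S$, and I set $B_S=KQ_S/I_S$, where $I_S$ is generated by exactly those subpaths $\alpha_i\cdots\alpha_j$ of $S$ that already vanish in $A$. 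Since $A$ is finite dimensional, $I_S$ is admissible, so each $B_S$ is a monomial Nakayama algebra and hence of finite representation type; I then put $B=\prod_S B_S$, which is again representation finite.

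Next I would define $f$. On arrows it is the obvious bijection: every arrow of $A$ lies in exactly one string $S$ and is sent to the corresponding arrow of $B_S$. On vertices I split, letting $f(e_v)$ be the sum of the idempotents $e_{(v,S,i)}$ over all occurrences of $v$ as the $i$-th vertex of a string $S$. This is a sum of orthogonal primitive idempotents, the $f(e_v)$ are orthogonal for distinct $v$, and they sum to $1_B$, so $f$ extends to a unital algebra homomorphism once I check that it respects products. The decisive use of the hypotheses is here: any nonzero path of length $\geq 2$ in $A$ consists of arrows each of which is the successor of the previous one, so it lies entirely inside a single string, and its image is then the corresponding path in one factor $B_S$, which is nonzero precisely because $I_S$ was defined to match $I$ along $S$. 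Conversely a product of two paths that vanishes in $A$ maps to zero in $B$, either because the factors lie in different components of the product, or because the two split idempotents fail to match, or because the concatenated subpath was explicitly placed into $I_S$.

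Finally I would verify injectivity and the radical condition. Distinct basis paths of $A$ map to distinct basis paths in the factors $B_S$, since equal images would force equal strings and equal positions, which yields injectivity. For the radical condition, $f$ carries arrows to arrows bijectively, so $f(\rad A)\subseteq \rad B$ is immediate, and conversely every nonzero path of length $\geq 1$ in some $B_S$ is the image of the corresponding nonzero path in $A$, giving $\rad B\subseteq f(\rad A)$; hence $f(\rad A)=\rad B$. I expect the main obstacle to be the bookkeeping for the well-definedness of $f$, namely keeping track of the split vertices (the splitting datum) and confirming in every case that zero and nonzero products are matched correctly across the factors $B_S$. This is exactly the place where the special multiserial and monomial hypotheses do the real work, since without them a nonzero path could branch or leave its string and the assignment of each arrow to a single $B_S$ would break down.
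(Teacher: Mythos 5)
Your proposal is correct and is essentially the paper's argument: the target algebra $B$ is the same product of representation-finite string (Nakayama) algebras on linearly oriented type $\mathbb{A}$ and cyclically oriented type $\widetilde{\mathbb{A}}$ quivers, and your map $f$ is exactly the composite of the paper's chain of radical embeddings, which the paper instead builds one vertex at a time by induction on the number of branching vertices, invoking the splitting-datum result of \cite{EHIS} at each step rather than verifying the radical-embedding axioms directly as you do. The only point to tidy is the degenerate case of a vertex incident to no arrow, which lies on no string and should be assigned its own $\mathbb{A}_1$ factor so that $f(e_v)\neq 0$ and injectivity is preserved.
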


\begin{cor}
Let $A$ be a monomial special multiserial algebra. Then $repdim (A) \leq 3$.
\end{cor}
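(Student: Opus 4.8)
The plan is to read off the bound from Theorem~\ref{mainresult} together with the main result of \cite{EHIS} on radical embeddings. Recall that $\repdim(A)=\inf\{\gldim\End_A(M)\}$, the infimum ranging over all generator--cogenerators $M$ of $\smod A$, and that a generator--cogenerator realising $\gldim\End_A(M)\le 3$ is called an \emph{Auslander generator}; producing one such $M$ is exactly what is needed. By Theorem~\ref{mainresult} there is a radical embedding $f\colon A\mto B$ with $B$ of finite representation type. The feature of $f$ that drives everything is that it identifies the radicals, $f(\rad A)=\rad B$; consequently restriction of scalars along $f$ sends a $B$-module $X$ to an $A$-module with the same underlying space and the same radical, since $(\rad A)X=f(\rad A)X=(\rad B)X$.

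First I would recall how \cite{EHIS} manufactures an Auslander generator from such an embedding. As $B$ is representation finite, $\smod B$ has an additive generator $N$, the direct sum of a complete set of indecomposable $B$-modules. Restricting $N$ along $f$ yields an $A$-module $N_A$, and I would take $M=A\oplus D(A)\oplus N_A$ (or the closely related module used in \cite{EHIS}), where $D$ is the standard duality; this is patently a generator--cogenerator of $\smod A$. For the algebras at hand the target $B$ is a finite product of Nakayama algebras --- the string algebras on linearly oriented $\mathbb{A}$-type and cyclically oriented $\tilde{\mathbb{A}}$-type quivers --- so its indecomposables are uniserial and their restrictions are precisely the uniserial submodules of $A$; thus $M$ is the Auslander generator anticipated in the introduction.

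The single nontrivial input is then the inequality $\gldim\End_A(M)\le 3$, which I would quote from \cite{EHIS} rather than reprove: the radical embedding hypothesis is exactly what forces the minimal right $\operatorname{add}(M)$-approximations to be controlled well enough that every minimal projective resolution over $\End_A(M)$ terminates after three steps, the extra step over the representation-finite bound of two reflecting the passage from $B$ to $A$. Granting this, $M$ is an Auslander generator and $\repdim(A)\le 3$. The only hypothesis left to verify is that the $B$ delivered by Theorem~\ref{mainresult} really is representation finite, and this is immediate: a serial (Nakayama) algebra is classically of finite representation type, and $B$ is a finite product of such algebras. Hence the entire difficulty of the corollary is concentrated in Theorem~\ref{mainresult} itself, the construction of the radical embedding; once that embedding is in hand the corollary follows formally from \cite{EHIS}.
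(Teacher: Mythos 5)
Your proposal is correct and follows essentially the same route as the paper: the corollary is obtained by combining Theorem~\ref{mainresult} with Theorem~\ref{EHIS}\,(1) from \cite{EHIS}, and your description of the Auslander generator $A \oplus D(A) \oplus N$ (with $N$ the restriction along $f$ of the additive generator of $\smod B$, giving the uniserial submodules of $A$) matches the remark the paper makes after its proof. The only quibble is terminological: an Auslander generator is defined as a generator-cogenerator attaining the infimum $\repdim(A)$, not merely one with $\gldim \End_A(M)^{op} \leq 3$, but this does not affect the argument.
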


In \cite{GS3} Brauer configuration algebras are defined as generalisations of Brauer graph algebras. Brauer configuration algebras are symmetric algebras, so in particular they are self-injective and it follows from the next result that their representation dimension is less or  equal to 3. 

\begin{cor}
Let $A$ be a self-injective special multiserial algebras.  Then $repdim (A) \leq 3$. In particular, the representation dimension of a Brauer configuration algebra is  less or equal to $3$.
\end{cor}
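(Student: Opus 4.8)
\emph{Plan.} The whole statement will be reduced to the existence of one radical embedding together with \cite{EHIS}. Recall that the Corollary to Theorem~\ref{mainresult} is deduced from \cite{EHIS} exactly because a radical embedding $f\colon A\to B$ with $B$ of finite representation type forces $\repdim(A)\leq 3$. Hence for a self-injective special multiserial algebra $A=KQ/I$ it suffices to produce such an embedding, and the only new difficulty relative to the monomial case is that $I$ need not be a monomial ideal. The first step is therefore to reinstate a monomial picture: by the result established above, every relation of a special multiserial algebra is either a monomial or a linear combination of paths lying in $\soc(A)$. Thus I may write $I=\langle I_0,\,I_1\rangle$ with $I_0$ generated by monomial relations and $I_1$ generated by socle relations, where $I_1\subseteq\soc(A)$; in particular $I_0$ defines a monomial special multiserial algebra on the same quiver $Q$.

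The main step is to run the construction underlying Theorem~\ref{mainresult} directly for $A$, carrying the socle relations along rather than discarding them. That construction produces, for the monomial data $(Q,I_0)$, a radical embedding into a finite product $B$ of string algebras whose quivers are linearly oriented Dynkin diagrams of type $\mathbb{A}$ and cyclically oriented diagrams of type $\tilde{\mathbb A}$, all of finite representation type. Because $A$ is self-injective, each non-monomial relation in $I_1$ equates two \emph{maximal} paths, so the identifications imposed by $I_1$ take place entirely inside the socle of the target. These are precisely the identifications that the cyclic Nakayama relations on a $\tilde{\mathbb A}$-component already make among the top powers of the defining cycle. I would therefore choose the cyclic components of the target $B'$ so that each socle relation of $A$ is realised as a genuine relation there, and then verify that the resulting algebra homomorphism $f\colon A\to B'$ is injective and satisfies $f(\rad A)=\rad B'$, i.e. is a radical embedding. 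Representation-finiteness of $B'$ is inherited from its Nakayama factors.

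The hard part will be the compatibility check at the socle: one must show that imposing the socle identifications of $I_1$ neither collapses any generator of $\rad A$ nor forces vanishing beyond the prescribed relations, so that the equality $f(\rad A)=\rad B'$ is preserved and $f$ stays injective. This is where self-injectivity is used essentially, since it guarantees that the paths appearing in each socle relation are exactly the maximal nonzero paths of their respective uniserial constituents, and hence can be matched with the socles of the cyclically oriented components without introducing new relations. Once this is in place, \cite{EHIS} yields $\repdim(A)\leq 3$.

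Finally, the statement about Brauer configuration algebras is immediate: such an algebra is symmetric, hence self-injective, and is a special multiserial algebra by \cite{GS3}, so the inequality $\repdim(A)\leq 3$ applies to it as a special case.
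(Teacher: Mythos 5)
There is a genuine gap in your main step. You propose to run the splitting construction of Theorem~\ref{mainresult} on the monomial part $(Q,I_0)$ while ``carrying the socle relations along'', so that $A$ itself embeds into a product $B'$ of Nakayama algebras in which each relation of $I_1$ is ``realised as a genuine relation''. This cannot work as stated. A non-monomial relation of a special multiserial algebra has the form $p-\lambda q$ (more generally $\sum \lambda_p p$) where $p$ and $q$ are distinct parallel paths passing through \emph{different} arrows of $Q$. The splitting construction separates the arrows of $Q$ into disjoint linear and cyclic strands, so after finitely many splittings $p$ and $q$ lie in \emph{different connected components} of the target quiver. Writing $p=p'c$ and $q=q'd$ with $c,d\in Q_1$, multiplicativity forces $f(p)=f(p')f(c)$ to lie in one component and $f(q)=f(q')f(d)$ in another, so $f(p)\neq\lambda f(q)$ and $f$ is not a well-defined algebra homomorphism on $A$. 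A cyclic Nakayama relation on a single $\tilde{\mathbb A}$-component only relates powers of \emph{one} cycle and cannot encode an identification between two cycles that the splitting has placed in distinct components. The ``compatibility check at the socle'' you defer is therefore not a verification but the obstruction itself.

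The paper circumvents this entirely: since $A$ is self-injective, every indecomposable projective is projective-injective, and Theorem~\ref{EHIS}(2) allows one to pass to $A/\soc(P)$ without losing the bound on the representation dimension. Iterating over all indecomposable projectives kills precisely the paths occurring in the non-monomial relations (by Proposition~\ref{prop:selfinjective} these all lie in the socle), so the resulting quotient is a \emph{monomial} special multiserial algebra, to which Theorem~\ref{mainresult} and Theorem~\ref{EHIS}(1) apply. In other words, one must quotient by the socles first and only then construct the radical embedding; embedding first does not commute with the splitting construction. Your reduction of the Brauer configuration algebra statement to the self-injective case is correct.
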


\begin{cor}\label{FinDim} Let $A$ be a special multiserial algebra. Then the finitistic dimension of $A$ is finite.  
\end{cor}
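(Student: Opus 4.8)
The plan is to reduce everything to the monomial case already treated together with the Igusa--Todorov criterion. The key homological input is that, by \cite{IT} (see also \cite{EHIS}), an Artin algebra whose representation dimension is at most three has finite finitistic dimension. Combined with the preceding corollaries this immediately settles the monomial and the self-injective cases; in fact a self-injective algebra has finitistic dimension zero, since over it a module of finite projective dimension is projective. Hence the real content is to pass from a monomial special multiserial algebra to an arbitrary one.

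Let $A = kQ/I$ be special multiserial. Using the structural result established above --- that every relation of $A$ is either a monomial or a linear combination of paths lying in the socle of $A$ --- I would attach to $A$ its associated monomial algebra $A_{\mathrm{mon}} = kQ/I_{\mathrm{mon}}$, where $I_{\mathrm{mon}}$ is generated by the monomial relations of $I$ together with every individual path occurring in a non-monomial socle relation. First I would check that $A_{\mathrm{mon}}$ is again monomial and special multiserial, so that by the first corollary $\repdim(A_{\mathrm{mon}}) \leq 3$ and therefore $\findim(A_{\mathrm{mon}}) < \infty$.

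Next I would compare $A$ with $A_{\mathrm{mon}}$ homologically. By construction there is a surjection $\pi : A \to A_{\mathrm{mon}}$ whose kernel $J$ is spanned by socle paths; since these paths are maximal, $J$ is annihilated by $\rad(A)$ on both sides, so $J \subseteq \soc(A)$, $J$ is semisimple and $J^2 = 0$. Consequently the indecomposable projectives $e_iA$ and $e_i A_{\mathrm{mon}} = e_iA/e_iJ$ differ only inside their socles. I would exploit this to show that the minimal projective resolution over $A$ of a module of finite projective dimension differs from the corresponding resolution over $A_{\mathrm{mon}}$ only by semisimple socle summands, so that the two projective dimensions differ by at most a constant $c$ depending only on $A$. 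This yields $\findim(A) \leq \findim(A_{\mathrm{mon}}) + c < \infty$.

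The main obstacle is precisely this last comparison: controlling how the socle relations in $J$ propagate through syzygies, and in particular ruling out the a priori possibility that adjusting a socle relation lengthens minimal resolutions unboundedly --- exactly the pathology that prevents finitistic dimension from being a general invariant of the associated monomial algebra. The special multiserial hypothesis is what makes this tractable: by the description in \cite{GS1} the radical of every indecomposable is a sum of uniserial modules meeting in simples or zero, which forces the syzygies to remain direct sums of uniserial-type modules and confines the socle perturbation to the top of each resolution.
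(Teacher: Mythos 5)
Your reduction has the same overall shape as the paper's: by Proposition~\ref{prop:selfinjective} every non-monomial relation is a combination of socle paths, so factoring out a semisimple socle ideal $J$ (the paper uses all of $\soc(A)$, you use the slightly smaller ideal spanned by the paths occurring in non-monomial relations) yields a monomial special multiserial algebra on the same quiver, to which Theorem~\ref{mainresult} applies. Up to that point the argument is sound.

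The gap is in your final comparison step. You claim that because $J\subseteq\soc(A)$ is semisimple with $J^2=0$, minimal projective resolutions over $A$ and over $A_{\mathrm{mon}}=A/J$ differ only by ``semisimple socle summands,'' so that projective dimensions differ by a bounded constant and $\findim(A)\leq\findim(A_{\mathrm{mon}})+c$. No such general bound exists: finiteness of $\findim(A/J)$ for a square-zero socle ideal $J$ does not by itself control $\findim(A)$, and the closing appeal to the uniserial structure of radicals from \cite{GS1} is not an argument --- syzygies of arbitrary $A$-modules of finite projective dimension are not shown to be of ``uniserial type,'' nor is the perturbation shown to be confined to the top of the resolution. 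You correctly identify this as the dangerous point but then do not actually resolve it. The paper avoids the issue entirely by citing \cite[4.3]{X}: if $J$ is an ideal with $J\cdot\rad(A)=0$ and $\repdim(A/J)\leq 3$, then $\findim(A)<\infty$ (the proof runs through the Igusa--Todorov function). Note that this result needs the \emph{representation dimension} bound on the quotient, not merely finiteness of its finitistic dimension --- exactly the extra leverage your syzygy comparison lacks. Since you already have $\repdim(A_{\mathrm{mon}})\leq 3$ from the first corollary, replacing your third and fourth paragraphs by an appeal to \cite[4.3]{X} closes the gap and recovers the paper's proof.
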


{\bf Acknowledgements:} I would like to thank Ed Green for  the many wonderful hours we have spent talking mathematics. In particular, I would like to thank him, Karin Erdmann and Jan Geuenich for the discussions involving the results of this paper. I also would like to thank Julian K\"ulshammer for alerting me to a mistake in an earlier version of the paper.

\section{Background}

Let $K$ be an algebraically closed field. A quiver $Q = (Q_0, Q_1, s, e)$ consists of a finite set of vertices $Q_0$, a finite set of arrows $Q_1$ and maps $s, e : Q_1 \to Q_0$ where, for $a \in Q_1$, $s(a)$ denotes the vertex at which $a$ starts and $e(a)$ denotes the vertex at which $a$ ends. For $a, b \in Q_1$, such that $e(a) = s(b)$, we write $ab$ for the element in $KQ$ given by the concatenation of $a$ and $b$. For $v \in Q_0$, denote by $e_v$ the associated idempotent. We call an element $x \in KQ$ uniform if there exists $v, w \in Q_0$ such that $e_v x  e_w = x$. All modules considered are finitely generated right modules and  for $A$ a finite dimensional $K$-algebra, we denote by $\mod A$ the category of finitely generated right $A$-modules. Furthermore, set $D(A) = \Hom_K(A,K)$ and denote by $J_A$ the Jacobson radical of $A$. We call a finite dimensional $K$-algebra basic, if $A = KQ/I$ for $I$ an admissible ideal in $KQ$. 

From now on, whenever we write $A= KQ/I$, we assume that $I$ is admissible.

Recall that $\gldim (A) = {\rm sup} \{{\rm pd} (M) \mid M \in \mod A\}$  
and   that $M$ is a generator-cogenerator of $A$ if $A \oplus D(A) \in {\rm add }  \; M$ where ${\rm add }  \; M$ is the subcategory of $\mod A$ generated by direct sums of direct summands of $M$. Then $$\repdim (A) = {\rm inf} \{ \gldim (\End_A(M)^{op}) \mid M \mbox{ is a generator-cogenerator of $A$ } \}.$$  Moreover, $M$ is an {\it Auslander generator} of $A$ if $\gldim (End_A(M)^{op}) = \repdim (A)$. 
The {\em finitistic dimension of $A$} is given by $${\rm findim}(A) = {\rm sup} \{ {\rm pd}(M) \mbox{ for all $M$ such that } {\rm pd}(M) < \infty\}.$$

Let $A = KQ/I$, we say that condition (S) holds for $A$ if the following holds: 

\medskip

 (S) For all  $a \in Q_1$ there exists at most one arrow $b \in Q_1$ such that $ab \notin I$ and there exists at most one arrow $c \in Q_1$ such that $ca \notin Q_1$.

\medskip

\begin{definition} {\rm A finite dimensional algebra $A$ is {\it special multiserial} if it is Morita equivalent to an algebra $KQ/I$ such that (S) holds. }
\end{definition}

We recall the following results and definitions  from \cite{EHIS}. 
For $v \in Q_0$, set $S(v)$ to be the subset of $Q_1$ consisting of arrows starting at $v$ and set $E(v)$ to be  the set of arrows of $Q_1$ ending at $v$. Note that if there is a loop $a$ at $v$ then $a \in E(v) \cap S(v) \neq \emptyset$. 

Suppose $S(v) = S_1 \sqcup S_2$ and $E(v) = E_1 \sqcup E_2$ are disjoint unions.  
The collection $Sp = (S_1, S_2, E_1, E_2)$ is a {\it splitting datum at $v$} (for $I$) if
\begin{enumerate} 
\item $ab \in I$, for all $a \in E_i$ and $b \in S_j$ with $i \neq j$, 
\item $I = \langle \rho \rangle $ where $\rho$ is a set of relations of the form $\sum \lambda apb $ such that none of the $a$ are in $E_1$ or none of the $a$ are in $E_2$ and such that  none of the $b$ are in $S_1$ or none of the $b$ are in $S_2$. 
\end{enumerate} 
 
An algebra $KQ/I$ is called {\it monomial} if $I$ is monomial, that is if $I$ is generated by paths.  Remark that condition (2) always holds if $I$ is monomial.

 Let $Sp = (S_1, S_2, E_1, E_2)$ be a splitting datum at $v$. Then we define a new quiver $$Q^{Sp} = ( Q_0^{Sp}, Q_1^{Sp}, s^{Sp}, e^{Sp} )$$ by setting $$Q_0^{Sp} = \{ v_1, v_2 \} \cup Q_0 \setminus \{ v\}$$ and $$Q_1^{Sp} = Q_1.$$ The map $s^{Sp}: Q_1^{Sp} \to Q_0^{Sp}$ is given by
 \begin{equation*}
  s^{Sp} (a) = 
    \begin{cases}
       v_i  & \mbox{ if } a \in S_i, i = 1,2, \\
       s(a) & \mbox{ otherwise. } \\
    \end{cases}       
\end{equation*}  

  The map $e^{Sp}: Q_1^{Sp} \to Q_0^{Sp}$ is given by
 \begin{equation*}
  e^{Sp} (a) = 
    \begin{cases}
       v_i  & \mbox{ if } a \in E_i, i = 1,2, \\
       e(a) & \mbox{ otherwise. } \\
    \end{cases}       
\end{equation*}

 We define $A^{Sp} = KQ^{Sp}/ I^{Sp}$ for $I^{Sp} = \langle \rho^{Sp} \rangle$ where
 
 $$\rho^{Sp} = \rho \setminus (\{ ab \vert a \in E_i \mbox{ and } b \in S_j, \mbox{ for } i \neq j   \} $$ 
  
 A {\it radical embedding} $f : A \to B$ is an algebra monomorphism such that $f (J_A) = J_B$. 
It is shown in \cite{EHIS} that  a  splitting datum gives rise to a radical embedding.  

\begin{prop}\cite{EHIS}\label{EHISsp}
Let $A = KQ/I$ with $I$ admissible. Let $Sp = (E_1, E_2, S_1, S_2)$ be a  splitting datum at some vertex $v$ of $Q$. Then there exists a radical embedding $f: A \to A^{Sp}$. 
\end{prop}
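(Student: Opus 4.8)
The plan is to construct $f$ explicitly from the vertex-splitting prescription and then verify, in turn, that it is a well-defined algebra homomorphism, that it is injective, and that it maps $J_A$ onto $J_{A^{Sp}}$. Since $Q$ and $Q^{Sp}$ share the same arrow set and differ only in that $v$ has been replaced by the pair $v_1,v_2$, I would first define a homomorphism $\tilde f\colon KQ\to KQ^{Sp}$ on generators by $\tilde f(e_v)=e_{v_1}+e_{v_2}$, $\tilde f(e_w)=e_w$ for $w\neq v$, and $\tilde f(a)=a$ for every $a\in Q_1=Q_1^{Sp}$. The only path-algebra relations needing checking are those at $v$: for $a\in S_i$ one has $s^{Sp}(a)=v_i$, so $(e_{v_1}+e_{v_2})a=a=\tilde f(e_v a)$, and dually for $b\in E(v)$; together with $e_{v_1}e_{v_2}=0$ this routine verification at $v$ shows $\tilde f$ respects the idempotent and incidence relations, hence is an algebra homomorphism.

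The second step is to show $\tilde f(I)\subseteq I^{Sp}$, so that $\tilde f$ descends to $f\colon A\to A^{Sp}$. Because $I^{Sp}$ is an ideal and $\tilde f$ a homomorphism, it suffices to check this on the generating set $\rho$. A relation $ab$ with $a\in E_i$, $b\in S_j$, $i\neq j$, is sent to the product $ab$ in $KQ^{Sp}$, which vanishes since $e^{Sp}(a)=v_i\neq v_j=s^{Sp}(b)$; these are exactly the relations deleted in passing from $\rho$ to $\rho^{Sp}$. For a remaining relation $r=\sum\lambda\, apb\in\rho^{Sp}$ I would argue that $\tilde f$ sends $r$ to the identical element of $\rho^{Sp}\subseteq I^{Sp}$. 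The content here is precisely condition (2), which guarantees that no monomial of $r$ acquires a forbidden transition $E_i\to S_j$ ($i\neq j$) at $v$ and is therefore annihilated by $\tilde f$. This is the step I expect to be the main obstacle, as it is the only place where the precise shape of the relations enters and where one must be sure that every monomial of a non-deleted relation survives intact under splitting.

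For injectivity I would work with the path bases. Call a path of $Q$ \emph{non-crossing} if it makes no transition $E_i\to S_j$ with $i\neq j$ at $v$; then every path of $Q^{Sp}$ lifts uniquely to a non-crossing path of $Q$, while $\tilde f$ annihilates exactly the crossing paths and is injective on the span of the trivial paths together with the non-crossing positive-length paths. Each crossing path contains a length-two subpath $ab$ with $a\in E_i$, $b\in S_j$, $i\neq j$, which lies in $I$ by condition (1); hence the kernel of $\tilde f$ is contained in $I$. To finish, given $x$ with $\tilde f(x)\in I^{Sp}$ I would subtract its crossing part (which lies in the kernel of $\tilde f$ and in $I$) and reduce to a non-crossing $x$; writing generators of $I^{Sp}$ as $u r w$ with $r\in\rho^{Sp}$ and $u,w$ paths of $Q^{Sp}$, the hypothesis $urw\neq0$ forces all junctions to be $v_i$-consistent, so the unique lift is the non-crossing path $\tilde u\, r\, \tilde w\in\langle\rho\rangle=I$. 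This yields $\tilde f^{-1}(I^{Sp})=I$, i.e. $f$ is injective; note that this argument again relies on the relations of $\rho^{Sp}$ being non-crossing, which is condition (2).

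Finally, for the radical-embedding property I would observe that $J_A$ and $J_{A^{Sp}}$ are spanned by their positive-length paths and that $f$ sends arrows to arrows, so $f(J_A)\subseteq J_{A^{Sp}}$; conversely every positive-length path of $Q^{Sp}$ that is nonzero in $A^{Sp}$ is the image under $f$ of its non-crossing lift, which is nonzero in $A$ by injectivity, giving $f(J_A)=J_{A^{Sp}}$. Hence $f$ is a radical embedding. It remains only to record that $I^{Sp}$ is again admissible, which follows because the deleted relations all have length two and $I$ was admissible.
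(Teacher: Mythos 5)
The paper itself gives no proof of this proposition --- it is quoted verbatim from \cite{EHIS} --- so there is nothing internal to compare against; your plan is the natural direct one. The first and last stages are sound: $\tilde f$ is well defined on generators; $\tilde f(I)\subseteq I^{Sp}$ holds because the deleted generators $ab$ become zero in $KQ^{Sp}$ and every surviving $r\in\rho$ is sent to the corresponding element of $\rho^{Sp}$ by definition (note that condition (2) is \emph{not} what is needed here, and it does not say that no monomial of $r$ has an internal transition $E_i\to S_j$ at $v$; some monomials of a relation may well die under $\tilde f$, harmlessly); $\Ker\tilde f$ is indeed the span of the crossing paths and lies in $I$ by condition (1); and the argument that $f(J_A)=J_{A^{Sp}}$ is fine once injectivity is in place.

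The genuine gap is in the injectivity step, exactly where condition (2) must do its work. Your sentence ``the hypothesis $urw\neq0$ forces all junctions to be $v_i$-consistent, so the unique lift is the non-crossing path $\tilde u\,r\,\tilde w\in I$'' implicitly treats $r$ as a single monomial. For a relation $r=\sum\lambda_p p$ whose monomials begin with arrows distributed over both $S_1$ and $S_2$ (or end with arrows distributed over both $E_1$ and $E_2$), the element $e_{v_1}rw\in I^{Sp}$ is a \emph{proper partial sum} of $r$ --- only the monomials starting at $v_1$ survive the junction --- and this partial sum is a non-crossing element of $KQ$ lying in $\tilde f^{-1}(I^{Sp})$ that need not lie in $I$; in that situation $f$ is simply not injective, and your proposed lift fails because $\tilde f(e_v r\tilde w)=(e_{v_1}+e_{v_2})rw\neq e_{v_1}rw$. (When $u,w$ both have positive length the discarded monomials reappear as crossing paths and are killed, so only the new trivial paths $e_{v_1},e_{v_2}$ cause trouble --- but $I^{Sp}$ is spanned by such products too.) What is needed, and what condition (2) is designed to supply, is that each generating relation remains \emph{uniform} after the split: all its initial arrows lying in $S(v)$ belong to the same $S_i$ and all its terminal arrows lying in $E(v)$ belong to the same $E_j$, so that $e_{v_i}re_{v_j}$ is always $r$ or $0$. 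You cite condition (2) but gloss it as ``the relations of $\rho^{Sp}$ are non-crossing,'' which is a statement about internal transitions, not about the endpoints, and so the one hypothesis of the proposition beyond condition (1) is never actually brought to bear where the argument needs it.
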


Also recall the following results from \cite{EHIS}.

\begin{theorem}\cite{EHIS}\label{EHIS}  Let $A$ and $B$ be basic algebras.  \begin{enumerate} 
\item 
If $f: A \to B$ is a radical embedding with $B$ a representation finite algebra then $repdim (A) \leq 3$. 

\item Let  $P$  be an indecomposable projective-injective $A$-module and set   $ A/ soc(P)$. Then $repdim(A) \leq 3$ if $repdim (A/ soc(P)) \leq 3$. 
\end{enumerate}
\end{theorem}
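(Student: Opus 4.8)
The plan is to deduce both statements from Auslander's criterion for representation dimension: if $M$ is a generator--cogenerator of an algebra $\Lambda$, then $\gldim\End_\Lambda(M)^{op}\le 3$ if and only if every $X\in\mod\Lambda$ fits into a short exact sequence $0\to M_1\to M_0\to X\to 0$ with $M_0,M_1\in\operatorname{add} M$ which remains exact after applying $\Hom_\Lambda(M,-)$. Thus in each part it suffices to produce a single generator--cogenerator $M$ together with, for every module $X$, such a length-one $\operatorname{add} M$-approximation resolution.

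For (1), since $B$ is representation finite I would let $N$ be the direct sum of a complete set of indecomposable $B$-modules and set $M=\operatorname{res}_f N\oplus A\oplus D(A)$, where $\operatorname{res}_f$ denotes restriction of scalars along $f$. As $A\oplus D(A)\in\operatorname{add} M$, the module $M$ is automatically a generator--cogenerator of $A$. The structural input supplied by the radical condition $f(J_A)=J_B$ is this: because $J_B$ is a two-sided ideal of $B$ identified with $J_A$, for every projective $A$-module $P=eA$ the radical $\rad P=eJ_A$ becomes, under $f$, the right $B$-submodule $f(e)J_B$ of $B$ whose restriction to $A$ is again $\rad P$; likewise every semisimple $A$-module is annihilated by $J_A=J_B$ and hence is a $B$-module. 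Since $N$ is an additive generator of $\mod B$, all of these modules lie in $\operatorname{add}(\operatorname{res}_f N)\subseteq\operatorname{add} M$.

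With this in hand, for an arbitrary $X$ I would start from the projective cover $0\to\Omega X\to P_0\to X\to 0$, in which $P_0\in\operatorname{add} A\subseteq\operatorname{add} M$ and the syzygy satisfies $\Omega X\subseteq\rad P_0$, the latter being a $B$-module. The length-one resolution should then be obtained by comparing this sequence with the $B$-module structure ambient to $\Omega X$, for instance by replacing the cover with a right $\operatorname{add}(\operatorname{res}_f N)$-approximation built from $\rad P_0$ and $\operatorname{top} X$, or from the coinduced module $\Hom_A(B,\Omega X)$. I expect \emph{this} to be the main obstacle: one must show that the resulting approximation is surjective with kernel again in $\operatorname{add} M$ and that the whole sequence stays $\Hom_A(M,-)$-exact. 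This is exactly the point at which the equality $f(J_A)=J_B$ is indispensable, since it forces the syzygies of $A$-modules to be trapped inside genuine $B$-modules, so that the discrepancy between $\Omega X$ and its $B$-envelope inside $\rad P_0$ is absorbed by summands already in $\operatorname{add} M$; carrying out this comparison carefully is the technical heart of the argument.

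For (2), write $C=A/\soc(P)$ and $I=\soc(P)$; since $\soc P\subseteq J_A$ the ideal $I$ has square zero. I would pull back an Auslander generator $M_C$ of $C$ along the surjection $A\to C$, viewing $C$-modules as $A$-modules, and set $M=M_C\oplus P\oplus A\oplus D(A)$. For any $X\in\mod A$ the sequence $0\to XI\to X\to X/XI\to 0$ has both outer terms annihilated by $I$, hence lying in $\mod C$, where they admit length-one $\operatorname{add} M_C$-approximations by hypothesis. Splicing these with the middle term, and using that $P$ is projective--injective so that the relevant extensions of the simple module $I\cong\soc P$ split off through $P$, should yield the required $\Hom_A(M,-)$-exact resolution over $A$. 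Here the main obstacle is to verify that $M$ is genuinely a generator--cogenerator of $A$ and that the splicing preserves $\Hom_A(M,-)$-exactness across the socle layer, which is precisely where the projective--injectivity of $P$ is used.
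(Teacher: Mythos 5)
First, a point of reference: the paper does not prove this theorem at all — it is quoted from \cite{EHIS} — so your attempt has to be measured against the proof in that source rather than against anything in this paper. Your overall strategy does match \cite{EHIS}: both parts are proved there via Auslander's criterion, by exhibiting a generator--cogenerator $M$ such that every $X$ admits an exact sequence $0\to M_1\to M_0\to X\to 0$ with $M_0,M_1\in\operatorname{add}M$ that stays exact under $\Hom_A(M,-)$; and your choices of $M$ (in (1), $A\oplus D(A)\oplus\operatorname{res}_f N$ with $N$ an additive generator of the $B$-modules; in (2), an Auslander generator of $A/\soc(P)$ together with $P$ and $A\oplus D(A)$) are the right ones.

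What you present, however, is a plan rather than a proof: in both parts the step you yourself label ``the main obstacle'' and ``the technical heart'' is precisely the content of the theorem, and it is not carried out. The most concrete gap is in (1). You correctly note that $\rad P_0=eJ_A\cong f(e)J_B$ is the restriction of a $B$-module, and then hope that, because $\Omega X\subseteq\rad P_0$ is ``trapped inside a genuine $B$-module'', the kernel of your approximation lands in $\operatorname{add}(\operatorname{res}_f N)$. This inference fails: an $A$-submodule of the restriction of a $B$-module need not be stable under the $B$-action — the primitive idempotents of $B$ need not lie in $f(A)$ and so need not preserve $\Omega X$ inside $f(e)J_B$ — hence $\Omega X$ need not be (a summand of) a restricted $B$-module, and the crucial membership $M_1\in\operatorname{add}M$ is unproved. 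The argument in \cite{EHIS} does not trap syzygies; it constructs for each $X$ an explicit sequence whose outer terms are shown to \emph{be} restrictions of $B$-modules (using $f(J_A)=J_B$ to identify radicals and tops of projectives with $B$-module data), and then separately verifies $\Hom_A(M,-)$-exactness. Neither of your fallback suggestions (an approximation built from $\rad P_0$ and the top of $X$, or the coinduced module $\Hom_A(B,\Omega X)$) is developed to the point where the kernel can be identified. Part (2) has the analogous defect: after splicing the length-one resolutions of $XI$ and $X/XI$ you must show that morphisms from $P$ — which is not a module over $A/\soc(P)$ — into $X$ factor through the spliced $M_0$, and that the kernel of the splice lies in $\operatorname{add}M$; this is exactly where the projective-injectivity of $P$ is used in \cite{EHIS}, and ``should yield'' leaves it unestablished.
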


\section{Some results on special multiserial algebras}

In the following proposition we show that the relations in a special multiserial algebra are of a particular form.

\begin{prop}\label{prop:selfinjective}
   Let $A = KQ/I$ be a special multiserial algebra with $I = \langle \rho \rangle$ satisfying condition (S). Let
$r = \sum \lambda_p p  \in \rho$ be   uniform with $\lambda_p \in K$ such that almost all $\lambda_p = 0 $ and where each $p$ is a path in $Q$. Then either $r$ is a path or for all $\lambda_p \neq 0$, $p$ is in the socle of $A$ as a right and left $A$-module.   
\end{prop}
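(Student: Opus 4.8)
The plan is to first reduce to a clean situation and then run a right-multiplication argument that exploits condition (S) twice: once to control which monomials survive a multiplication, and once (via a rigidity property) to factor the surviving sum so that a local invertibility argument applies.

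First I would dispose of the trivial terms. Any path $p$ with $p\in I$ is zero in $A$ and hence lies in $\soc A$ on both sides for free, so after discarding such terms I may replace $r$ by the sum $r'$ of its summands $p$ with $p\notin I$; since the discarded part lies in $I$, we still have $r'\in I$, and $r'$ is uniform, say of type $v\to w$. If $r'$ has at most one summand then either $r'=0$ (so all terms of $r$ were in $I$ and hence in the socle) or $r'$ is a single nonzero path, which is impossible for a nonzero element of $I$; thus I may assume $r'=\sum_i\lambda_i p_i$ has at least two summands, each $p_i\notin I$, and it suffices to show every $p_i$ lies in the left and right socle. The key structural consequence of (S) I would record next is \emph{suffix rigidity}: if two paths not in $I$ end in the same arrow $a$, then one is a suffix of the other. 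This follows by peeling off $a$ and applying the ``at most one arrow $c$ with $ca\notin I$'' half of (S) repeatedly, together with the fact that a suffix of a path not in $I$ is again not in $I$. Dually, two paths not in $I$ beginning with the same arrow are prefix-comparable.

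Now for the right socle, suppose some $p_k$ is not in it, so $p_kb\notin I$ for some arrow $b$; let $a$ be the final arrow of $p_k$, so $ab\notin I$. By the ``at most one $c$ with $cb\notin I$'' part of (S), $a$ is the unique arrow that $b$ may follow, so every summand of $r'$ whose final arrow differs from $a$ is killed upon right multiplication by $b$. Hence $0=r'b=\sum_{i\in T}\lambda_i p_i b$ in $A$, where $T$ indexes the summands ending in $a$. By suffix rigidity these $p_i$ are totally ordered by the suffix relation; letting $q$ be the shortest one, each $p_i=z_iq$ with $z_i$ a cycle at $v$ (and $z_i=e_v$ for $q$ itself), so the surviving sum factors as $Z\cdot(qb)$ with $Z=\sum_{i\in T}\lambda_i z_i\in e_vAe_v$. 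The contradiction I am aiming for then comes from the observation that $e_vAe_v$ is a finite-dimensional algebra in which the positive-length cycles span a nilpotent ideal; since $Z=\lambda_q e_v+(\text{positive length})$ with the coefficient $\lambda_q\neq0$, it is invertible there, whereas $Z\cdot(qb)\in I$ forces $qb\in I$, contradicting $qb\notin I$ (note $qb$ is a suffix of $p_kb\notin I$). Thus every $p_i$ lies in the right socle, and the left socle is handled by the mirror-image argument, multiplying on the left and using invertibility in $e_wAe_w$.

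The step I expect to be the main obstacle is precisely the case where several summands share the final arrow $a$: a single such summand gives an immediate contradiction, but in general one must rule out cancellation among the surviving monomials. This is exactly what the factorization $Z\cdot(qb)$ together with the local invertibility of $Z$ resolves; in the two-term case it reduces to the transparent self-similarity $p=c\,y\,p$ with $y$ a nonempty cycle at $v$, which iterates to $p=c^ny^np=0$ by nilpotency, contradicting $p\notin I$.
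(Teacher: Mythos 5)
Your proof is correct, and its engine is the same as the paper's: multiply the relation on the right by an arrow $b$ witnessing $p_kb\notin I$, use the ``at most one $c$ with $cb\notin I$'' half of (S) to kill all summands not ending in the last arrow of $p_k$, and then peel off common final arrows of the survivors. Where you genuinely add something is in the treatment of the surviving summands when they have different lengths. The paper's proof peels arrows in lockstep and concludes ``continuing in this way, we see that $p=q$,'' which is only literally available when the two paths have equal length; if one survivor is a proper suffix of another (so $p=zq$ with $z$ a nontrivial cycle at the source), the lockstep peeling exhausts the shorter path and the argument as written stops. Your suffix-rigidity lemma plus the factorization $\sum_{i\in T}\lambda_ip_ib=Z\cdot(qb)$ with $Z=\lambda_qe_v+(\text{positive-length cycles})$ invertible in $e_vAe_v$ (unit plus an element of the nilpotent ideal $e_vJ_Ae_v$) closes exactly this case: it forces $qb\in I$, contradicting that $qb$ is a suffix of $p_kb\notin I$. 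So your route is not a different strategy so much as a completed version of the same one, at the cost of one extra observation (local invertibility in $e_vAe_v$) that the paper does not invoke. Your preliminary reductions (discarding summands lying in $I$, and noting that a relation with exactly one summand outside $I$ is impossible) are also sound and make the case analysis cleaner than the paper's, which treats the one-term, two-term and general cases separately. The left-socle statement does follow by the mirror argument in $e_wAe_w$, exactly as the paper asserts via ``specialness on the left side.''
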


\begin{proof}
We will start by showing that the result holds when considering the socle of $A$ as a right $A$-module. 
Suppose there exists a unique $\alpha_p \neq 0$, then $r = p$. 

Suppose that $r = \lambda_p p - \lambda_q q$ with $p, q \notin I$. Then without loss of generality we can assume that $\lambda_p =1$. 
\newline
Now suppose that $p \notin \soc A$ and that $q \in \soc A$. Then there exists $a \in Q_1$ such that $pa \notin I$ but since $q \in \soc A$, $qa \in I$ and this a contradiction. Suppose now that
$p, q \notin \soc A$. Then there exist $a, b \in Q_1$ such that $pa, qb \notin I$. Since $p-\lambda_q q \in I$, by condition (S) we have $a=b$.  Therefore if 
$p = p'c$ and $q = q'd$ for $c, d \in Q_1$ then $ca, da \notin I$. This implies  by condition (S) that $c=d$ and hence $(p' - \lambda_q q')c \in I$. Moreover, $p'c, q'c \notin I$.
Now let $p' = p'' c'$ and $q' = q'' d'$ which implies that $c'c, d'c \notin I$ and $c' = d'$. Continuing in this way, we see that $p = q$. 

Suppose now that $r = \sum \lambda_p p$ and suppose that $\lambda_q \neq 0$ and $q \notin \soc A$. Then there exists $a \in Q_1$ such that $qa \notin I$ and therefore there exists $q'$ with $\lambda_{q'} \neq 0$ and $q' a \notin I$. Since $A$ is special, this implies $q = q'$. Inductively it then follows that $pa \notin I$ for any $p$ such that $\lambda_p \neq 0$ and using that $A$ is special it follows that  $r =  q$. 

Note that we have only used specialness on the right  side. Using specialness on the left side, we obtain the result for the socle of $A$ as a left $A$-module. 
\end{proof}

The following follows directly from condition (S).

\begin{lemma}\label{AisSpecial}
Let $A = KQ/I$ be monomial special multiserial and let $Sp = (S_1, S_2, E_1, E_2 )$ be a splitting datum at some vertex $v$ in $Q$. 
\begin{enumerate} \item Suppose that  $S_1 = \{b\}$, for  $ b \in Q_1$. Then $E_1$ consists of the unique arrow $a$ such that $ab \notin I$ if such an arrow $a$ exists, otherwise $E_1$ is empty. 
\item Suppose that $E_1 = \{c\}$, for $c \in Q_1$  then $S_1 = \{d\}$ where $d$ is the unique arrow such that $cd \notin I$ if such an arrow $d$ exits and $S_1$ is empty otherwise. 
\end{enumerate}
Moreover, for $Sp$ as in (1) or (2) above,  $A^{Sp}$ is monomial special multiserial.  
\end{lemma}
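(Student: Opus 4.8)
The plan is to read the statement as the prescription of a canonical splitting datum and to verify, in each case, both that the prescribed data satisfy the splitting axioms and that the resulting algebra again satisfies (S) and is monomial. Throughout I would use the two basic consequences of monomiality and (S): first, since $I$ is generated by paths, a length-two path $xy$ lies in $I$ if and only if $xy \in \rho$; second, (S) says that every arrow $x$ admits at most one arrow $y$ with $xy \notin I$ and at most one arrow $z$ with $zx \notin I$.

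For (1) I would argue as follows. With $S_1 = \{b\}$, the splitting axiom (1) applied to the pair $(E_2, S_1)$ forces $a'b \in I$ for every $a' \in E_2$. By (S) there is at most one arrow $a$ ending at $v$ with $ab \notin I$. Hence, if such an $a$ exists it cannot lie in $E_2$, so it must lie in $E_1$, and taking $E_1 = \{a\}$ is consistent with axiom (1): the remaining pair $(E_1, S_2)$ requires $ab'' \in I$ for all $b'' \in S_2$, which holds because $b$ is the unique right continuation of $a$, so $ab'' \in I$ for every $b'' \neq b$. If no such $a$ exists, then $a'b \in I$ for all in-arrows $a'$, so $E_1 = \emptyset$ satisfies axiom (1) vacuously on one side and trivially on the other. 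Axiom (2) is automatic since $A$ is monomial. Part (2) is the mirror image, exchanging the roles of starting and ending arrows and invoking the other half of (S): from $E_1 = \{c\}$, axiom (1) for the pair $(E_1, S_2)$ gives $cd'' \in I$ for all $d'' \in S_2$, while (S) singles out at most one $d$ with $cd \notin I$, which must therefore lie in $S_1$.

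For the ``moreover'' part I would first check that $A^{Sp}$ is monomial. The only generators removed in passing from $\rho$ to $\rho^{Sp}$ are the length-two paths $ab$ with $a \in E_i$, $b \in S_j$, $i \neq j$; after the split these are no longer paths in $Q^{Sp}$, since $e^{Sp}(a) \neq s^{Sp}(b)$, and every surviving generator of $\rho$ remains a genuine path in $Q^{Sp}$ (a minimal relation that traversed $v$ on opposite sides would contain such a length-two subpath and hence be redundant). Thus $\rho^{Sp}$ is a set of paths and $I^{Sp}$ is monomial. It then remains to verify (S) for $A^{Sp}$, and since nothing changes away from $v$, this reduces to the two new vertices $v_1, v_2$. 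The key point is that for arrows lying on the same side the equivalence $xy \in I^{Sp} \Leftrightarrow xy \in I$ holds, while cross-side compositions are simply not paths in $Q^{Sp}$; consequently the number of right (respectively left) continuations of any arrow at $v_1$ or $v_2$ is bounded by the corresponding number in $A$, which is at most one by (S).

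The main obstacle, and the step I would treat most carefully, is this last verification of (S) for $A^{Sp}$. The delicate point is that $I^{Sp} \subseteq I$, so \emph{more} paths become nonzero after the split, and a priori this could create a second admissible continuation and break (S). The argument above resolves this by observing that the only compositions that could do so are exactly the cross-side ones, and these cease to be paths in $Q^{Sp}$; the potential new continuations are thus removed at the level of the quiver rather than the ideal, and so (S) is inherited.
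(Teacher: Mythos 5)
Your argument is correct, and it is essentially the verification the paper omits: the paper's entire justification for this lemma is the sentence ``The following follows directly from condition (S).'' Your reading of the statement as prescribing a canonical splitting datum matches how the lemma is actually invoked in the proof of Theorem 1, and your key observation --- that passing to $I^{Sp} \subseteq I$ can only create new nonzero compositions across the two sides of $v$, which cease to be paths in $Q^{Sp}$ --- is precisely the point that makes condition (S) and monomiality hereditary under the split.
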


\section{Proof of Theorem 1}

We show that for any monomial special multiserial algebra $A = KQ/I$ there is a radical embedding of $A$ into a disjoint union of representation finite string algebras whose underlying quiver is either a linearly oriented quiver of type ${\mathbb A}$ and  or a cyclically oriented quiver of type $\widetilde {\mathbb A}$.

{\it Proof of Theorem 1 and Corollary 2:} 
Let $A = KQ/I$ be a monomial special multiserial algebra such that $I$ is generated by  paths. 
Define \sloppy $c(A) = \vert \{ v \in Q_0 \vert S(v) >1\} \vert + \vert \{  v \in Q_0 \vert E(v) >1 \} \vert$.

If $c(A) = 0$ then $Q$ is a disjoint union of quivers where each quiver  is either a linearly oriented quiver of type ${\mathbb A}$ or a cyclically oriented quiver of type $\widetilde {\mathbb A}$. So $A$ is a product  of representation finite string algebras,  and it therefore is  of finite representation type. 

Suppose that $c(A) \geq 1$. Let $v \in Q_0$ such that $\vert S(v) \vert > 1$ or $\vert E(v) \vert > 1$. Suppose that $ S(v) = \{b_1, \ldots, b_n \}$ with $n \in \mathbb N, n > 1$. Set
$$ \begin{array}{lll}  
  	S_1 & = & \{ b_1\}, \\ 
  	S_2 &=& \{ b_2, \ldots, b_n\}, \\
	E_2 &=& \{ a \in E(v) \vert ab_1 \in I \}, \\
	E_1 &=& E(v) \setminus E_2. 
\end{array}$$
Note that $E_1$ consists of the unique arrow $a \in Q_1$ such that $ab_1 \notin I$ if such an arrow exists. 
That   $Sp = (S_1, S_2, E_1, E_2) $ is a splitting datum at $v$  follows directly (S) and from the fact that $I$ is monomial. 

By Lemma~\ref{AisSpecial}, $A^{Sp}$ is again a monomial special multiserial algebra and $c(A^{Sp}) \leq c(A) -1$. 

We treat the case $\vert E(v) \vert > 1$ in a similar way.

Repeating this a finite number of times and setting  $A = A_1$ and $A_2 = A^{Sp}$, we obtain by 
 Proposition~\ref{EHISsp}  a sequence of radical embeddings
  $A_1 \to A_2  \to \cdots A_k = B$ such that $B$ is a string algebra with $c(B)=0$ and $B$ is therefore representation finite. 
Then it follows from Theorem~\ref{EHIS} (1) that $repdim (A) \leq 3$. 
 $\Box$

Let $f: A \to B$ be the radical embedding constructed in the proof of Theorem 1 above. By the proof of Theorem 1.1 in \cite{EHIS} an Auslander generator of $A$ is given by $A \oplus D(A) \oplus N$ where $N$ is the direct sum of  isomorphism class representatives of the indecomposable $B$-modules considered as $A$-modules. Therefore in the case of a monomial special multiserial algebra $N$ is given by the direct sum of all uniserial submodules of $A$. $\Box$

{\it Proof of Corollary 3:}
Since $A$ is self-injective, every projective is injective. Applying Proposition~\ref{prop:selfinjective} we obtain that iteratively factoring out the socles of the projective injective indecomposable modules gives rise to a monomial special multiserial algebra. Thus the result follows from Theorem 1 and by  the successive application of Theorem~\ref{EHIS} (2). 
$\Box$

{\it Proof of Corollary~\ref{FinDim}:} It follows from Proposition~\ref{prop:selfinjective} that $A/\soc(A)$ is a monomial special multiserial algebra and the result follows from  \cite[4.3]{X} and Theorem~\ref{mainresult}.  $\Box$




\begin{thebibliography}{99}

\bibitem{Bass} H. Bass, Finitistic dimension and a homological generalization of semiprimary rings, Trans. Amer. Math. Soc. 95 (1960) 466--488.

\bibitem{EHIS} K. Erdmann, T. Holm, O. Iyama, J. Schr\"oer, Radical embeddings and representation dimension, Adv. Math. 185 (2004), 159--177.


\bibitem{GKK} E. L. Green, E. Kirkman and J. Kuzmanovich, Finitistic dimensions of finite dimensional monomial algebras, J. Algebra 136 (1991), 37--50.

\bibitem{GS1} E. L.  Green. Edward, S. Schroll,  Multiserial and special multiserial algebras and their representations. Adv. Math. 302 (2016), 1111--1136.

\bibitem{GS2} E. L.  Green. Edward, S. Schroll,  Almost gentle algebras and their trivial extensions,
arXiv:1603.03587.

\bibitem{GS3}  E. L.  Green. Edward, S. Schroll, Brauer configuration algebras,  Bull.  Sci. Math. 141 (2017) no. 6,  539--572. 


\bibitem{GZ} E. L. Green and B. Zimmermann-Huisgen, Finitistic dimension of artinian rings with vanishing radical cube, Math. Z.
206 (1991) 505--526.

\bibitem{Happel} D. Happel, Homological conjectures in representation theory of finite-dimensional algebras, Unpublished note.

\bibitem{IT} K. Igusa, G. Todorov,  On the finitistic global dimension conjecture for Artin algebras.  Representations of algebras and related topics, 201--204, Fields Inst. Commun. 45, Amer. Math. Soc., Providence, RI, 2005. 

\bibitem{IZ} K. Igusa, D. Zacharia,  Syzygy pairs in a monomial algebra, Proc. AMS 108 (1990), 601- 604.

\bibitem{Schroer} J. Schr\"oer, Finitistic dimension conjectures, FD-Atlas. 

\bibitem{Wei} J. Wei, Finitistic dimension and Igusa-Todorov algebras. Adv. Math. 222 (2009), no. 6, 2215--2226.

\bibitem{X} C. Xi,  On the finitistic dimension conjecture. II. Related to finite global dimension. Adv. Math. 201 (2006), no. 1, 116--142. 


\bibitem{ZH} B. Zimmermann-Huisgen, The finitistic dimension conjectures - a tale of 3.5 decades, Abelian groups and modules (Padova, 1994), 501--517, Math. Appl., 343, Kluwer Acad. Publ., Dordrecht, 1995.




\end{thebibliography}
\end{document}